\let\origsection=\section \def\section{\@ifstar{\origsection*}{\mysection}}
\def\mysection{\@startsection{section}{1}\z@{.7\linespacing\@plus\linespacing}{.5\linespacing}{\normalfont\scshape\centering\S}}
\renewcommand{\PrintDOI}[1]{\doi{#1}}
\numberwithin{equation}{section}
\numberwithin{figure}{section}
\def\rmlabel{\upshape({\itshape \roman*\,})}
\def\greek#1{\expandafter\@greek\csname c@#1\endcsname}
\def\Greek#1{\expandafter\@Greek\csname c@#1\endcsname}
\def\@greek#1{\ifcase#1
	\or $\alpha$%
	\or $\beta$%
	\or $\gamma$%
	\or $\delta$%
	\or $\epsilon$%
	\or $\zeta$%
	\or $\eta$%
	\or $\theta$%
	\or $\iota$%
	\or $\kappa$%
	\or $\lambda$%
	\or $\mu$%
	\or $\nu$%
	\or $\xi$%
	\or $o$%
	\or $\pi$%
	\or $\rho$%
	\or $\sigma$%
	\or $\tau$%
	\or $\upsilon$%
	\or $\phi$%
	\or $\chi$%
	\or $\psi$%
	\or $\omega$%
\fi}
\def\@Greek#1{\ifcase#1
	\or $\mathrm{A}$%
	\or $\mathrm{B}$%
	\or $\Gamma$%
	\or $\Delta$%
	\or $\mathrm{E}$%
	\or $\mathrm{Z}$%
	\or $\mathrm{H}$%
	\or $\Theta$%
	\or $\mathrm{I}$%
	\or $\mathrm{K}$%
	\or $\Lambda$%
	\or $\mathrm{M}$%
	\or $\mathrm{N}$%
	\or $\Xi$%
	\or $\mathrm{O}$%
	\or $\Pi$%
	\or $\mathrm{P}$%
	\or $\Sigma$%
	\or $\mathrm{T}$%
	\or $\mathrm{Y}$%
	\or $\Phi$%
	\or $\mathrm{X}$%
	\or $\Psi$%
	\or $\Omega$%
\fi}
\AddEnumerateCounter{\greek}{\@greek}{24}
\AddEnumerateCounter{\Greek}{\@Greek}{12}
\let\polishlcross=\l
\def\l{\ifmmode\ell\else\polishlcross\fi}
\def\paragraph#1{%
  %\smallskip%
  \noindent\textbf{#1.}\enspace}
\let\emptyset=\varnothing
\let\setminus=\smallsetminus
\def\moverlay{\mathpalette\mov@rlay}
\def\mov@rlay#1#2{\leavevmode\vtop{   \baselineskip\z@skip \lineskiplimit-\maxdimen
   \ialign{\hfil$\m@th#1##$\hfil\cr#2\crcr}}}
\newcommand{\charfusion}[3][\mathord]{
    #1{\ifx#1\mathop\vphantom{#2}\fi
        \mathpalette\mov@rlay{#2\cr#3}
      }
    \ifx#1\mathop\expandafter\displaylimits\fi}
\DeclareFontFamily{U}  {MnSymbolC}{}
\DeclareSymbolFont{MnSyC}         {U}  {MnSymbolC}{m}{n}
\DeclareFontShape{U}{MnSymbolC}{m}{n}{
    <-6>  MnSymbolC5
   <6-7>  MnSymbolC6
   <7-8>  MnSymbolC7
   <8-9>  MnSymbolC8
   <9-10> MnSymbolC9
  <10-12> MnSymbolC10
  <12->   MnSymbolC12}{}
\DeclareMathSymbol{\powerset}{\mathord}{MnSyC}{180}
\let\epsilon=\varepsilon
\let\eps=\epsilon
\let\rho=\varrho
\let\theta=\vartheta
\let\kappa=\varkappa
\let\E=\EE
\def\PP{{\mathds P}}
\def\rm{\mathbb{RM}}
\theoremstyle{plain}
\newtheorem{thm}{Theorem}[section]
\newtheorem{theorem}[thm]{Theorem}
\newtheorem{prop}[thm]{Proposition}
\newtheorem{cor}[thm]{Corollary}
\newtheorem{lemma}[thm]{Lemma}
\theoremstyle{definition}
\newtheorem{conj}[thm]{Conjecture}
\let\phi=\varphi
\begin{document}

\title[Twins in ordered hyper-matchings]{Twins in ordered hyper-matchings}

\author{Andrzej Dudek}
\address{Department of Mathematics, Western Michigan University, Kalamazoo, MI, USA}
\email{\tt andrzej.dudek@wmich.edu}
\thanks{The first author was supported in part by Simons Foundation Grant \#522400.}

\author{Jaros\l aw Grytczuk}
\address{Faculty of Mathematics and Information Science, Warsaw University of Technology, Warsaw, Poland}
\email{jaroslaw.grytczuk@pw.edu.pl}
\thanks{The second author was supported in part by Narodowe Centrum Nauki, grant 2020/37/B/ST1/03298.}

\author{Andrzej Ruci\'nski}
\address{Department of Discrete Mathematics, Adam Mickiewicz University, Pozna\'n, Poland}
\email{\tt rucinski@amu.edu.pl}
\thanks{The third author was supported in part by Narodowe Centrum Nauki, grant 2018/29/B/ST1/00426}

\begin{abstract}An \emph{ordered $r$-matching} of \emph{size} $n$ is an $r$-uniform hypergraph on a linearly ordered set of vertices, consisting of $n$ pairwise disjoint edges. Two ordered $r$-matchings are \emph{isomorphic} if there is an order-preserving isomorphism between them. A pair of \emph{twins} in an ordered $r$-matching is formed by two vertex disjoint isomorphic sub-matchings. Let $t^{(r)}(n)$ denote the maximum size of twins one may find in \emph{every} ordered $r$-matching of size $n$.
	
By relating the problem to that of largest twins in permutations and applying some recent Erd\H os-Szekeres-type results for ordered matchings, we show that  $t^{(r)}(n)=\Omega\left(n^{\frac{3}{5\cdot(2^{r-1}-1)}}\right)$ for every fixed $r\geqslant 2$. On the other hand, $t^{(r)}(n)=O\left(n^{\frac{2}{r+1}}\right)$, by a simple probabilistic argument. As our main result, we prove that, for \emph{almost all} ordered $r$-matchings of size $n$, the size of the largest twins  achieves this bound.
\end{abstract}

\maketitle

%\tableofcontents

\setcounter{footnote}{1}

\section{Introduction}
Let $r\ge2$ be a fixed integer. An \emph{ordered $r$-matching} of \emph{size} $n$ is an $r$-uniform hypergraph $M$ on a linearly ordered vertex set~$V$, with $|V|=rn$, consisting of $n$ pairwise disjoint edges. One may represent such a matching as a \emph{word} with $n$ distinct letters in which every letter occurs exactly $r$ times. For instance, the word $AABCBDBDACCD$ represents an ordered $3$-matching of size $4$ whose edges correspond to sets of positions occupied by a given letter, i.e., $A=\{1,2,9\}$, $B=\{3,5,7\}$, $C=\{4,10,11\}$, and $D=\{6,8,12\}$. We will frequently identify an ordered matching with its representing word.

Two ordered matchings are \emph{isomorphic} if there is an order-preserving bijection between their vertex sets inducing a one-to-one correspondence between their edge sets. In particular, the words representing isomorphic ordered matchings are identical up to renaming the letters. A pair of two vertex disjoint isomorphic sub-matchings of an ordered matching $M$ is called \emph{twins}. By the \emph{size} of twins we mean the size of just one of the two in the pair. The maximum size of twins in a matching $M$ will be denoted by $t(M)$. The corresponding extremal function is defined by $$t^{(r)}(n)=\min\{t(M):M\text{ is an ordered $r$-matching of size $n$}\}.$$

For instance, in the matching $M$ represented by the word $$\color{red}AA\color{blue}B\color{black}E\color{red}C\color{blue}BD\color{black}EE\color{blue}BD\color{red}ACC\color{blue}D$$
one can find a pair of twins of size two formed by the sub-matchings $\color{red}AACACC$ and  $\color{blue}BBDBDD$. So, $t(M)=2$ as, trivially, we always have $t(M)\le n/2$ for a matching of size $n$.

In a recent paper \cite{DGR-Latin} (see also \cite{DGR-match}) we demonstrated that the following inequalities hold for all  $n\ge2$ (note that $t^{(2)}(1)=0$): $$\frac{1}{16\sqrt[5]{2}}\cdot n^{\frac{3}{5}}\leqslant t^{(2)}(n)\leqslant \frac{e}{\sqrt[3]{2}}\cdot n^{\frac23}.$$
The upper bound is obtained by a standard probabilistic argument based on the expectation. In fact, we proved in \cite{DGR-Latin} that this upper bound is a.a.s.\ (asymptotically almost surely) attained by almost all ordered matching of size $n$. The lower bound is a consequence of a result by Bukh and Rudenko \cite{BukhR}, concerning the related problem  for permutations.

By \emph{twins} in a permutation $\pi$ we mean a pair of disjoint order-isomorphic subsequences of~$\pi$. Let $\tau(n)$ be the maximum length of twins contained in every permutation of length~$n$. The result in \cite{BukhR} states that $\tau(n)\ge\tfrac18\cdot n^{\frac{3}{5}}$, which is so far the best lower bound, while $\tau(n)=O\left(n^{\frac{2}{3}}\right)$ follows by an elementary probabilistic argument. It is conjectured by Gawron \cite{Gawron} that this upper bound yields the correct order of magnitude for $\tau(n)$. If true, this would imply the same for the function $t^{(2)}(n)$, since we proved in \cite{DGR-Latin, DGR-match} that $t^{(2)}(n)=\Theta(\tau(n))$. For other related results on twins in permutations, as well as for the rich background of the problem, we encourage the reader to look at our paper \cite{DGR}.

In the present paper we extend the above results to ordered $r$-matchings, $r\geqslant 3$, in both, the deterministic and the random setting. Let $\rm_{n} := \rm^{(r)}_{n}$ denote a \emph{random} ordered $r$-matching of size $n$ (for precise definition, see the next section).

\begin{thm}\label{thm:random}
	For every $r\ge2$, a.a.s.,
	\begin{equation}\label{eq:thm:random}
			t(\rm_n) = \Theta\left(n^{\frac{2}{r+1}}\right).
	\end{equation}	
\end{thm}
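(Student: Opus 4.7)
Standard first/second moment attack, centered on the random variable
\[
X_k\ :=\ \#\bigl\{\text{unordered pairs of vertex-disjoint isomorphic sub-matchings of }\rm_n\text{ of size }k\bigr\}.
\]
A direct count---choose the $2rk$ positions of the twin, designate one half as ``red'' and the other as ``blue'', pick a common order-isomorphism type (a word) on $rk$ slots, then fill in the remaining $n-2k$ edges arbitrarily---yields, after dividing by $N_n=(rn)!/(n!(r!)^n)$ (the total number of ordered $r$-matchings),
\[
\E[X_k]\ =\ \frac{n!\,(r!)^k}{2\,(n-2k)!\,(rk)!\,k!}\ =\ \exp\!\Bigl(k\bigl(\log(C_r n^2/k^{r+1})+o(1)\bigr)\Bigr)
\]
by Stirling, with $C_r=r!\,e^{r+1}/r^r$. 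Hence $\E[X_k]$ crosses from $\infty$ down to $0$ precisely around $k\asymp n^{2/(r+1)}$.

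\textbf{Upper bound.} For $k=Kn^{2/(r+1)}$ with $K>C_r^{1/(r+1)}$ a large enough constant, the expression above gives $\E[X_k]=o(1)$, and Markov's inequality yields $t(\rm_n)<Kn^{2/(r+1)}$ a.a.s.

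\textbf{Lower bound via second moment.} Take $k=cn^{2/(r+1)}$ with $c<C_r^{1/(r+1)}$ chosen small enough that $\E[X_k]\to\infty$. I aim to verify $\E[X_k^2]=(1+o(1))\E[X_k]^2$, which by Chebyshev gives $X_k>0$ a.a.s. Expanding
\[
\E[X_k^2]=\sum_{(T,T')}\PP\bigl(T\cup T'\subseteq\rm_n\bigr)
\]
over pairs of (potential) twin configurations $T,T'$, I stratify $(T,T')$ by the number $j$ of edges of $\rm_n$ shared between $T$ and $T'$ (and further by how those shared edges are distributed across the two components of each twin). The $j=0$ stratum recovers $(1+o(1))\E[X_k]^2$; the goal is to bound every $j\ge 1$ stratum by $\E[X_k]^2\cdot(k^{r+1}/n^2)^{\beta(j)}$ with some $\beta(j)>0$, so that the sum over $j\ge 1$ is $o(\E[X_k]^2)$ once $c$ is small enough.

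\textbf{The hard part.} Controlling the variance is the substance of the argument. The twin constraint is rigid: a single shared edge between $T$ and $T'$ typically propagates through both order-isomorphisms, forcing cascades of further coincidences and producing many interlocking overlap types (not conveniently indexed by a single parameter, because a shared edge in one half of $T$ can correspond to either half of $T'$). Enumerating these types uniformly and, for each, balancing the count of configurations against the conditional probability of $T'$ given $T$---so as to extract a genuine saving at the critical scale $k=\Theta(n^{2/(r+1)})$, where $\E[X_k]$ is only quasi-polynomial in $n$ and no slack is available---is what I expect to be the principal technical hurdle.
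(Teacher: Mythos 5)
Your upper bound is essentially the paper's: same $\E[X_k]$, same Stirling estimate, same first‑moment conclusion. That part is fine.

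The lower bound, however, is only a plan, and the plan is not the paper's. You propose the second moment method on $X_k$ at the critical scale $k\asymp n^{2/(r+1)}$, and you correctly identify that the variance control is where the difficulty lies---but you then leave exactly that step undone. This is a genuine gap, not a deferral of routine bookkeeping: at the critical window $\E[X_k]$ is only $\exp(\Theta(k))$, there is essentially no slack, and overlap strata where $T'$ shares all but a bounded number of edges with $T$ (say $T'$ obtained from $T$ by swapping one edge in one half) are plentiful and heavily correlated with $T$. It is not at all clear that $\sum_{j\ge1}$ of those strata is $o(\E[X_k]^2)$ for any positive $c$; you would at least need a careful case analysis of how a shared edge can sit in either component of either twin, and that combinatorics was never attempted. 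In short, the hard part is not merely expected to be hard---it is where the proof has to actually live, and nothing is there.

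The paper avoids the second moment entirely. It partitions $[rn]$ into $N=\Theta(n^{2/(r+1)})$ blocks of equal length $a=\Theta(n^{(r-1)/(r+1)})$, builds an auxiliary $r$-uniform hypergraph $H$ on $[N]$ whose edges record index sets $I$ hit by at least two edges of $\rm_n$, and observes that a matching in $H$ yields twins of the same size (because two edges with the same block‑signature form the same pattern, and a set of disjoint block‑signatures determines the isomorphism type). It then lower‑bounds $\E(\nu(H))$ by counting \emph{isolated} $H$-edges (crude but enough), and upgrades the expectation bound to an a.a.s.\ statement by Talagrand's permutation concentration inequality applied to $\nu(H)$---which has the Lipschitz and small‑certificate properties that $X_k$ or the count of isolated $H$-edges do not. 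This reduction to a matching number of a derived hypergraph, plus concentration, is the idea your proposal is missing. Either supply the second‑moment analysis in full (which I doubt closes cleanly) or switch to the blocking‑plus‑concentration route.
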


The proof of the upper bound follows, once again, by a standard application of the first moment method. To get the lower bound we apply a more sophisticated argument, similar to that in \cite{DGR} and \cite{DGR-Latin,DGR-match}, which is based on  a concentration inequality of Talagrand for permutations \cite{LuczakMcDiarmid} (see Section~\ref{rm}).

Of course, the upper bound in (\ref{eq:thm:random}) is also true in the deterministic case, so, we have $t^{(r)}(n)=O\left(n^{\frac{2}{r+1}}\right)$, for every $r\geqslant 2$. Our second result gives a lower bound for $t^{(r)}(n)$.

\begin{thm}\label{thm:deterministic}
	For every $r\geqslant 2$, we have
	\begin{equation}\label{eq:thm:deterministic}
		t^{(r)}(n) = \Omega\left(n^{\frac{3}{5}\cdot \frac{1}{2^{r-1}-1}}\right).
	\end{equation}
\end{thm}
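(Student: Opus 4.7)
The plan is to reduce the problem of finding twins in an ordered $r$-matching to the corresponding problem for permutations, for which the Bukh-Rudenko bound $\tau(m) = \Omega(m^{3/5})$ from \cite{BukhR} is at our disposal. For $r=2$, this reduction is essentially immediate, since ordered $2$-matchings correspond naturally to permutation-type structures; this is precisely how the bound $t^{(2)}(n) = \Omega(n^{3/5})$ of \cite{DGR-Latin, DGR-match} was obtained. For $r \geqslant 3$, the reduction first requires distilling from $M$ a structured sub-matching on which such a correspondence becomes available, and the cost of this distillation determines the exponent $\tfrac{1}{2^{r-1}-1}$ appearing in \eqref{eq:thm:deterministic}.

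Concretely, given an ordered $r$-matching $M$ of size $n$, I would invoke a (recent) Erd\H{o}s-Szekeres-type extremal result for ordered $r$-matchings to extract a sub-matching $N \subseteq M$ of size $m = \Omega\bigl(n^{1/(2^{r-1}-1)}\bigr)$ whose edges are pairwise \emph{aligned}: every pair of edges in $N$ exhibits the same relative pattern (a fixed word in $\{a,b\}^{2r}$), and moreover this pattern is of a ``monotone'' type (e.g.\ fully crossing) in which the global structure of $N$ is determined by a single permutation $\pi_N$ of $[m]$ obtained, say, by ordering the edges according to the position of their first vertex. The exponent $2^{r-1}-1$ naturally arises from iterating a more elementary extremal result that, roughly, refines the matching one ``coordinate'' at a time: if $\beta(r)$ denotes the reciprocal of the exponent then the process yields the recursion $\beta(r) = 2\beta(r-1) + 1$ with $\beta(2) = 1$, solving to $\beta(r) = 2^{r-1}-1$.

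Once such an aligned $N$ is in hand, any two sub-matchings of $N$ of the same size correspond to two subsequences of $\pi_N$, and the sub-matchings are order-isomorphic as ordered $r$-matchings if and only if the corresponding subsequences are order-isomorphic as permutations. Applying the Bukh-Rudenko bound to $\pi_N$ then produces twins of size $\Omega(m^{3/5}) = \Omega\bigl(n^{3/(5(2^{r-1}-1))}\bigr)$ in $\pi_N$, which lift to twins of the same size in $N$ and hence in $M$.

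The main obstacle is the first step: setting up the Erd\H{o}s-Szekeres-type argument so that the extracted sub-matching $N$ is not merely pattern-homogeneous (in which case any Ramsey-type argument would suffice, but with much worse quantitative control) but is homogeneous of a pattern from which a permutation can be canonically read off. For $r=2$ this issue does not arise. For $r \geqslant 3$, the delicate point is that homogenizing the relative positions of the $r-1$ ``extra'' vertices per edge, one coordinate at a time, causes the exponent to shrink geometrically, accounting exactly for the factor $2^{r-1}-1$. Verifying that this iterative homogenization genuinely lands in a pattern class admitting the permutation correspondence, and that the losses compound into the advertised bound, is the technically substantive part of the argument.
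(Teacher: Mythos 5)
Your high-level plan — reduce to permutations and invoke the Bukh--Rudenko bound $\tau(m)=\Omega(m^{3/5})$, with the exponent $\frac{1}{2^{r-1}-1}$ coming from an Erd\H{o}s--Szekeres-type ``distillation'' step — is the same idea as the paper, and your recursion $\beta(r)=2\beta(r-1)+1$ does indeed solve to $2^{r-1}-1$. But the mechanism you describe for the distillation step has a genuine gap.

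The gap is in what you ask the extracted sub-matching $N$ to look like. You ask for $N$ to be pattern-homogeneous as an $r$-matching (``every pair of edges in $N$ exhibits the same relative pattern $P\in\{a,b\}^{2r}$''), and then to read a permutation $\pi_N$ off $N$. But if $N$ is a full $P$-clique in the sense that \emph{all} pairs of $r$-edges realize the same pattern $P$, then $N$ is rigid: up to order-isomorphism there is exactly one such $N$ of each size, so the permutation you read off is forced (essentially the identity), and Bukh--Rudenko gives nothing beyond the trivial $m/2$ you already get by splitting a clique in half. That is exactly the weaker bound $\Omega(n^{1/(2^r-1)})$ discussed (and dismissed) in the paper's introduction. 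What you actually need is a sub-matching in which only the \emph{first $r-1$ coordinates} of each edge are homogenized into an $(r-1)$-pattern clique, while the $r$-th coordinate remains genuinely free; \emph{and}, crucially, that $r$-th coordinate must be positioned in a controlled way relative to the prefix (say, entirely to the right of all prefix vertices), since otherwise two different interleavings of the free vertex with the prefix block would give different $r$-patterns and the permutation correspondence breaks down.

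The paper obtains this control via a structural case analysis that does not appear in your sketch: split the vertex set $[rn]$ into two halves $H_1,H_2$, classify the edges by $|e\cap H_1|=p$, and handle the cases separately. The dominant case is $p=r-1$ (or $p=1$): then the $(r-1)$-prefixes live in $H_1$ and the single remaining vertex of each edge lives in $H_2$, so after applying the Sauermann--Zakharov theorem (giving $P$-cliques of size $\ell_{r-1}(m)\gtrsim m^{1/(2^{r-1}-1)}$ in $(r-1)$-matchings) to the prefixes, the suffix vertices form an honest permutation to which Bukh--Rudenko applies — that is where the exponent $\frac35\cdot\frac{1}{2^{r-1}-1}$ comes from, in a \emph{single} application of the $(r-1)$-matching clique theorem, not from iterating a coordinate-at-a-time extremal step. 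The other cases (edges entirely in one half, or a more balanced split $2\le p\le r-2$) are handled by a double recursion on $r$ and $n$ (Lemma~\ref{double_ind} and Corollary~\ref{corol}); your proposal does not address these, and they are not just cosmetic — without them one cannot guarantee the favourable $(r-1,1)$ split ever occurs. In short: your target exponent and your use of Bukh--Rudenko are right, but the homogenization you describe would over-homogenize (killing the permutation freedom), and the half-split device plus the case/recursion machinery needed to land in the favourable configuration is the substantive missing ingredient.
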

The proof of Theorem \ref{thm:deterministic} in Section \ref{gen}, is based on  a key lemma (Lemma \ref{double_ind}), establishing a  double recurrence for $t^{(r)}$,  and a recent Erd\H{o}s-Szekeres type result~\cite{ErdosSzekeres} for ordered matchings obtained by  Sauerman and Zakharov \cite{SZ}.
 %\cite{DGR-rM}. We will also make use of a very recent extensions of this result due to Anastos, Jin, Kwan, and Sudakov \cite{AJKS} and.
 This latter result states that every sufficiently large ordered $r$-matching $M$ contains a large homogenous ``clique'', i.e., a sub-matching $M'$ whose all pairs of edges form pairwise  isomorphic $r$-matchings (of size two).
 %to the same \emph{pattern} $P$ (a fixed ordered matching of size $2$).
 More specifically, the size of such a ``clique'' $M'$ in any ordered $r$-matching with $n$ edges is at least $\frac{1}{2}\cdot n^{\frac{1}{2^r-1}}$, as proved in \cite{SZ}.

 Observe that if $M'=\{e_1,\dots,e_m\}$, then by arbitrarily splitting $M'$ in half, e.g., $\{e_1,\dots,e_{\lfloor m/2\rfloor}\}$ and $\{e_{\lceil m/2\rceil+1},\dots,e_m\}$, we obtain twins. Thus, we immediately get the lower bound $t^{(r)}(n) = \Omega\left(n^{\frac{1}{2^r-1}}\right)$, which is, however, much worse than the bound in (\ref{eq:thm:deterministic}). For instance, the exponents of $n$ in these two bounds are equal, respectively, to $\frac15$ and $\frac17$ ($r=3$), to $\frac{3}{35}$ and $\frac{1}{15}$ ($r=4$), and to $\frac{1}{25}$ and $\frac{1}{31}$ ($r=5$). In fact, their ratio converges to $\frac{6}{5}$ as $r$ is growing. Nonetheless, the ratio of the logarithms of the current upper and lower bounds for $t^{(r)}(n)$ grows rather rapidly with $r$.

We will use the standard notation $[n]:=\{1,2,\ldots,n\}$.

\section{Random matchings}\label{rm}

In this section we prove Theorem \ref{thm:random}. Recall that $\rm_{n} := \rm^{(r)}_{n}$ denotes a random ordered $r$-matching of size $n$, that is, an ordered $r$-matching picked uniformly at random out of all
\[
\alpha^{(r)}_n:=\frac{(rn)!}{(r!)^n\, n!}
\]
such $r$-matchings on the set $[rn]$.

 The formula for $\alpha^{(r)}_n$ indicates that each ordered matching can be coupled with exactly $(r!)^n n!$ permutations. Indeed, one can generate an ordered matching by the following permutational scheme. Let $\pi$ be a permutation of $[rn]$. Now $\pi$ can be chopped off into an $r$-matching consisting of the following collection of edges
 $$\{\pi(1),\dots,\pi(r)\}, \{\pi(r+1),\dots,\pi(2r)\},\dots,\{\pi(rn-r+1),\dots,\pi(rn)\}.$$
 Clearly, there are exactly $(r!)^n n!$ permutations $\pi$ yielding the same matching. Thus, a (uniformly) random permutation $\Pi_{rn}$ of $[rn]$ generates a (uniformly) random $r$-matching $\rm_n$. This scheme  allows one to use concentration inequalities (such as the Talagrand inequality~\cite{Talagrand}) for random permutations in the context of random matchings.

\begin{proof}[Proof of Theorem \ref{thm:random}: upper bound] The upper bound in formula (\ref{eq:thm:random}) was already mentioned without proof in~\cite[Section 2]{DGR-rM}. The proof is based on the standard first moment method. For each $k$, let $X_k$ be
the number of twins of size $k$ in $\rm_{n}$. Then
\[
\E X_k=\frac1{2!}\cdot\binom{rn}{rk,rk,rn-2rk}\cdot\frac{\alpha^{(r)}_k\cdot 1 \cdot \alpha^{(r)}_{n-2k}}{\alpha^{(r)}_n}
= \frac{1}{2} \cdot \frac{n!}{(n-2k)!} \cdot \frac{1}{k!} \cdot (r!)^{k} \cdot \frac{1}{(rk)!},
\]
where the factor of 1 represents the second twin which is fully determined by the first one.

  Ignoring $\tfrac12$ and using inequalities $n! / (n-2k)! \le n^{2k}$, $k! \ge (k/e)^{k}$, $r! \le r^r$, and $(rk)! \ge (rk/e)^{rk}$, we thus get
\[
\E X_k\le n^{2k} \cdot \left(\frac{e}{k}\right)^k \cdot r^{rk} \cdot \left( \frac{e}{rk} \right)^{rk}= \left(n^{2} \cdot \frac{e}{k} \cdot r^{r} \cdot \left( \frac{e}{rk} \right)^{r}\right)^k
= \left( \frac{n^2}{\left(k/e \right)^{r+1}} \right)^k,
\]
which converges to 0 as $n\to\infty$ for $k\ge cn^{2/(r+1)}$, with any constant $c>e$.
Hence, for such~$k$, $\PP(X_k>0)\le\E X_k=o(1)$ and so, a.a.s.\ $t^{(r)}(\rm_n) \le cn^{2/(r+1)}$. \end{proof}

\medskip

An important ingredient of the proof of the lower bound in Theorem~\ref{thm:random} is a Talagrand's concentration inequality for random permutations from \cite{Talagrand}. We quote here a slightly simplified  version from \cite[Inequality (2) with $l=2$]{LuczakMcDiarmid} (see also \cite[Inequlity (1.3)]{McDiarmid}). Let ${\Pi}_{n'}$ be a random permutation of order $n'$ (we will be applying this theorem with $n'=rn$).
%In particular, this will also imply Theorem~\ref{lower_line}. However, the proof in the general case, compared to the one for lines, is far less elementary, since it

\begin{theorem}[Luczak and McDiarmid \cite{LuczakMcDiarmid}]\label{tala}
	Let $h(\pi)$ be a function defined on the set of all permutations of order $n'$ which, for some positive constants $c$ and $d$, satisfies
	\begin{enumerate}[label=\rmlabel]
		\item\label{thm:talagrand:i} if $\pi_2$ is obtained from $\pi_1$ by swapping two elements, then $|h(\pi_1)-h(\pi_2)|\le c$;
		\item\label{thm:talagrand:ii} for each $\pi$ and $s>0$, if $h(\pi)=s$, then in order to show that $h(\pi)\ge s$, one needs to specify only at most $ds$ values $\pi(i)$.
	\end{enumerate}
	Then, for every $\eps>0$,
	$$\PP(|h({\Pi}_{n'})-m|\ge \eps m)\le4 \exp(-\eps^2 m/(32dc^2)),$$
	where $m$ is the median of the random variable $h({\Pi}_{n'})$.
\end{theorem}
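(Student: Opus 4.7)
The plan is to adapt Talagrand's convex-distance concentration method, originally developed for product spaces, to the uniform measure on the symmetric group $S_{n'}$. For $A\subseteq S_{n'}$ and $\sigma\in S_{n'}$, define the convex distance
$$ d_T(\sigma, A) \;=\; \sup_{\alpha\ge 0,\ \|\alpha\|_2\le 1}\ \inf_{\tau\in A}\ \sum_{i:\,\sigma(i)\ne\tau(i)} \alpha_i. $$
The argument then splits naturally into two parts: (A) an abstract Talagrand-type bound $\PP(\Pi_{n'}\in A)\cdot\EE[\exp(d_T(\Pi_{n'},A)^2/K)]\le 1$ for some absolute constant $K$; and (B) a geometric lower bound on $d_T$ on the tail event, extracted from hypotheses (i) and (ii), which combines with (A) via Markov's inequality to produce the stated concentration.

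For part (A), my approach is to couple $\Pi_{n'}$ with an i.i.d.\ source via the Fisher--Yates shuffle: for $i=1,\ldots,n'$, choose $\Pi_{n'}(i)$ uniformly from $[n']\setminus\{\Pi_{n'}(1),\ldots,\Pi_{n'}(i-1)\}$. This exhibits $\Pi_{n'}$ as a deterministic function of $n'$ independent uniform variables, through which Talagrand's original product-space convex-distance inequality can be pushed. The subtle point is that a single swap of two values in $\Pi_{n'}$ may touch many Fisher--Yates coordinates, so a constant-factor loss in $K$ is inevitable; the key is to ensure $K$ remains absolute, independent of $n'$, $c$, $d$, and $h$.

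For part (B), let $m$ be the median and $A=\{\tau:h(\tau)\le m\}$, so $\PP(\Pi_{n'}\in A)\ge 1/2$. Fix $\pi$ with $h(\pi)=s\ge(1+\eps)m$ and let $I=I(\pi)$ be the certifying set from (ii), with $|I|\le ds$. The key subclaim is that every $\tau\in A$ satisfies $|D_\tau|\ge(s-m)/(2c)$, where $D_\tau=\{i\in I:\tau(i)\ne\pi(i)\}$. To prove it, define $\tau'$ by setting $\tau'(i)=\pi(i)$ for $i\in I$ and extending to $[n']\setminus I$ so as to minimize $d_H(\tau,\tau')$ among permutations; a direct count of values yields $d_H(\tau,\tau')\le 2|D_\tau|$ ($|D_\tau|$ forced changes on $I$ plus at most $|D_\tau|$ induced changes off $I$). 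Since permutations at Hamming distance $k$ are connected by at most $k$ transpositions, property (i) gives $|h(\tau)-h(\tau')|\le 2c|D_\tau|$; combined with $h(\tau')\ge s$ (from (ii)) and $h(\tau)\le m$, the subclaim follows. Taking weights $\alpha_i=\mathbf{1}[i\in I]/\sqrt{|I|}$ then yields
$$ d_T(\pi, A)\;\ge\;\frac{|D_\tau|}{\sqrt{|I|}}\;\ge\;\frac{s-m}{2c\sqrt{ds}}, $$
and since $(s-m)^2/s$ is increasing on $s>m$, the minimum over $s\ge(1+\eps)m$ is attained at the endpoint, giving $d_T(\pi,A)^2\ge\eps^2 m/(C_0 c^2 d)$ for an absolute $C_0$. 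Plugging into part (A) via Markov produces the upper tail with the exponent in the statement. The lower tail $\PP(h\le(1-\eps)m)$ is obtained symmetrically with $A'=\{\tau:h(\tau)\ge m\}$; the only wrinkle is that the certifying set there depends on $\tau\in A'$ rather than on $\pi$, which I would resolve either via a minimax reformulation of Talagrand's inequality or by a parallel Doob martingale whose increments are controlled by (i)--(ii).

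The main obstacle is part (A): the careful transfer of Talagrand's product-space inequality to $S_{n'}$ with an absolute constant, tracking how the swap metric on $S_{n'}$ relates to the coordinate metric on the Fisher--Yates representation. The combinatorial step in (B) (the bound $d_H(\tau,\tau')\le 2|D_\tau|$) and the lower-tail symmetrization are the remaining technical points, but neither is a serious hurdle beyond routine bookkeeping.
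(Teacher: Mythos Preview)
The paper does not prove this theorem at all: it is quoted verbatim from Luczak--McDiarmid (with the reference to Talagrand and to McDiarmid) and used as a black box in the proof of Theorem~\ref{thm:random}. So there is no ``paper's own proof'' to compare against; you are attempting to reprove an external result.

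That said, your overall architecture is the right one and is how such inequalities are established: a convex-distance isoperimetric inequality on $S_{n'}$ (your part~(A)), combined with the certificate argument (your part~(B)) that converts hypotheses~\ref{thm:talagrand:i}--\ref{thm:talagrand:ii} into a lower bound on $d_T(\pi,A)$. Part~(B) is essentially correct as written; the bound $d_H(\tau,\tau')\le 2|D_\tau|$ and the passage through transpositions are standard and fine. The lower-tail ``wrinkle'' you flag is also standard: one applies the two-set form $\PP(A)\,\PP(d_T(\cdot,A)\ge t)\le e^{-t^2/K}$ with $A=\{h\le(1-\eps)m\}$ and bounds $d_T(\sigma,A)$ for $\sigma$ with $h(\sigma)\ge m$, using a certificate at level $m$ (size $\le dm$); no minimax or martingale detour is needed.

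The genuine weak point is your plan for part~(A). Pushing Talagrand's product inequality through the Fisher--Yates bijection does \emph{not} give the convex-distance inequality on $S_{n'}$ with an absolute constant: the map is badly behaved in both directions (a single transposition can alter many Fisher--Yates coordinates, and conversely changing one Fisher--Yates coordinate can cascade through arbitrarily many positions of the resulting permutation), so the loss is not merely a constant factor. The proofs in Talagrand's original paper, in McDiarmid, and in Luczak--McDiarmid do \emph{not} go via a product embedding; they establish the isoperimetric inequality directly on the symmetric group by an induction on $n'$ tailored to the group structure (removing one element and controlling how $d_T$ changes). If you want to actually carry this out, that inductive argument is what you should reproduce, not the Fisher--Yates route.
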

 \noindent As mentioned above, one can use this lemma for random $r$-matchings, as they can be generated by random permutations.

\medskip

\begin{proof}[Proof of Theorem \ref{thm:random}: lower bound]
Set
\[
a:=\beta n^{(r-1)/(r+1)} \quad \text{ with } \quad \beta:=\frac{1}{(20er!)^{1/(r+1)}}
\]
and
\[
N:=\frac{rn}{a}=\frac{r}{\beta} n^{2/(r+1)}.
\]
For simplicity we assume that both $a$ and $N$ are integers. Partition  $[rn]=A_1\cup\cdots\cup A_{N}$, where $A_i$'s are consecutive blocks of  $a$ integers each. For every $I=\{i_1,\dots,i_r\}$ with $1\le i_1<\dots<i_r\le N$, we call an $r$-element subset $S\subset [rn]$ an \emph{$I$-set} if $|S\cap A_{i_j}|=1$ for each $1\le j\le r$.
Further, define a random variable $X_{I}$ which counts the number of edges of $\rm_n$ which are $I$-sets.

Consider an auxiliary $r$-uniform hypergraph $H:=H(\rm_n)$ on vertex set $[N]$, where $I$ is an edge in $H$ if and only if $X_{I}\ge 2$. For better clarity the edges of $H$ will be sometimes called $H$-edges. Trivially, the maximum degree in $H$ is at most $\binom{N-1}{r-1}$, but also, due to the disjointness of edges in $\rm_n$, at most $a/2$.

It is easy to see that a matching of size $k$ in $H$ corresponds to twins in~$\rm_n$ of size~$k$. Indeed,  let $M=\{I_1,\dots,I_k\}$ be a matching in $H$. For every $1\le \ell\le k$, let $e^{(\ell)}_1,e^{(\ell)}_2\in \rm_n$ be two $I_\ell$-edges. Then, the sub-matchings $M_1=\{e^{(1)}_1,\dots,e^{(k)}_1\}$ and $M_2=\{e^{(1)}_2,\dots,e^{(k)}_2\}$, owing to the sequential choice of $A_i$'s, form twins in $\rm_n$.
Thus, our ultimate goal is to show that a.a.s.~$H$ contains a matching of size $\Omega(n^{2/(r+1)})$. Let $\nu(H)$ be the size of the largest matching in $H$. Our ultimate goal is thus to show that a.a.s.\ $\nu(H)=\Omega(N)$. Anticipating  application of Theorem \ref{tala} to $\nu(H)$, it will be sufficient to  bound $\E(\nu(H))$ from below.
  We will do it in a most ``silly'' way by looking just for isolated $H$-edges.

 Let $H_1$ be a subgraph of $H$ induced by the set $V_1$ of vertices of degrees at most $1$ in~$H$, that is, $E(H_1)$ consists of isolated edges in $H$ which, of course, form a matching.  Set $W=|E(H)|$ and $W_1=|E(H_1)|$ for the random variables counting the edges in $H$ and $H_1$, respectively. Then,  $\E(\nu(H))\ge\E(W_1)$ and
\[
W_1 = W-|\{e\in E(H):\; e\cap (V\setminus V_1)\neq\emptyset\}|
\ge W-\sum_{d= 2}^{\lfloor a/2\rfloor} dZ_d,
\]
 where $Z_d$ counts the number of vertices of degree $d$ in $H$.  Note that $\E(Z_d)=N\PP(D=d)$, where $D$ is the degree of a fixed vertex, say vertex 1, in $H$. Thus, we have

 \begin{equation}\label{EWW1}
\E(W_1) \ge \E(W) - \E\left(\sum_{d=2}^{\lfloor a/2\rfloor} dZ_d\right)=\E(W)-N\sum_{d=2}^{\lfloor a/2\rfloor}d\PP(D=d)
 \end{equation}
\noindent and all we need are a lower bound on $\E(W)$ and an upper bound on $\PP(D=d)$, so that \eqref{EWW1} would yield $\E(W_1)=\Omega(N)$.
We begin with the former task. From the definition of $H$ we have

\begin{equation}\label{eq:expectatio_EH}
\E(W) = \sum_{I\in \binom{[N]}{r}} \PP(X_{I}\ge 2)
\end{equation}
which, however, unlike in \cite{DGR-match}, cannot be applied directly. The reason is that to bound $\PP(X_{I}\ge 2)$ from below one has to handle
expressions like $n!/(n-a)!$ which are asymptotic to $n^a$ as long as $a=o(\sqrt n)$. In \cite{DGR-match} the case of graphs ($r=2$) was considered
and we had $a=\Theta(n^{1/3})$, however, for $r\ge 3$ we have $a=\Theta(n^{(r-1)/(r+1)})=\Omega(\sqrt n)$.

Hence, instead of applying \eqref{eq:expectatio_EH} directly,  we define a random variable $Y$ that counts the number of pairs of edges $\{e_1,e_2\}$ for which there exists a set of indices $I=\{ i_1<\dots<i_r\}$ such that  both $e_1$ and $e_2$ are $I$-edges. By definitions, $Y$ and $X_I$ are related by the identity
\[
Y = \sum_{I \in \binom{[N]}{r},\; X_I\ge 2} \binom{X_{I}}{2}.
\]
Observe that for any  integer $t\ge 2$, using the obvious bound $X_{I} \le a$, we get

\[
Y = \sum_{I \in \binom{[N]}{r},\; 2\le X_I\le t} \binom{X_{I}}{2}
+ \sum_{I \in \binom{[N]}{r},\; X_I\ge t+1} \binom{X_{I}}{2}\le\binom{t}{2} \sum_{I \in \binom{[N]}{r},\; X_I\ge 2} 1
+\binom{a}{2} \sum_{I \in \binom{[N]}{r},\; X_I\ge t+1} 1.
\]

Thus,
\begin{align}
\E(Y) &\le \binom{t}{2} \sum_{I \in \binom{[N]}{r}} \PP(X_I\ge 2)
+\binom{a}{2} \sum_{I \in \binom{[N]}{r}} \PP(X_I\ge t+1) \notag\\
&= \binom{t}{2} \E(W)
+\binom{a}{2} \sum_{I \in \binom{[N]}{r}} \PP(X_I\ge t+1),\label{eq:Y2}
\end{align}
where we used~\eqref{eq:expectatio_EH}.

From here to obtain a lower bound on $\E(W)$ it suffices to bound $\E(Y)$ from below and show that, for some constant $t$, the latter summation above is $o(\E(Y))$. Observe that $\E(Y)$ can be calculated directly from its definition as
\[
\E(Y) = \binom{N}{r} \binom{a}{2}^r (2!)^{r-1} \cdot \frac{\alpha^{(r)}_{n-2}}{\alpha^{(r)}_n}
= \binom{N}{r} \binom{a}{2}^r (2!)^{r-1} \cdot (r!)^2 \frac{n!}{(n-2)!} \frac{(rn-2r)!}{(rn)!}.
\]
Indeed, there are $\binom Nr$ choices of $I=\{i_1,\dots,i_r\}$ and, given that, there are $\binom{a}{2}^r (2!)^{r-1}$ choices of two $I$-sets $e_1,e_2$. The quotient $\tfrac{\alpha^{(r)}_{n-2}}{\alpha^{(r)}_n}$ equals the probability that $\rm_n$ contains $e_1$ and $e_2$ as edges and the formula follows by the linearity of expectation.

Since $r$ is fixed (and the asymptotic is taken in $n$), each binomial coefficient can be easily approximated and furthermore $\frac{n!}{(n-2)!}\sim n^{2}$ and $\frac{(rn-2r)!}{(rn)!} \sim \frac{1}{(rn)^{2r}}$. This yields,
\[
\E(Y)
\sim \frac{N^r}{r!} \left( \frac{a^2}{2!} \right)^r (2!)^{r-1} \cdot \left( \frac{r! n}{(rn)^r} \right)^2
= \frac{N^r}{2r!} \left( \frac{(r-1)! a^r}{(rn)^{r-1}} \right)^2.
\]
Now expressing $N$ and $a$ as functions of $n$ (leaving just one factor of $N$ intact), we get
\begin{align*}
\E(Y) \sim
N \cdot \frac{\left(\frac{r}{\beta} n^{2/(r+1)}\right)^{r-1}}{2r!} \left( \frac{(r-1)! \beta^r n^{(r-1)r/(r+1)}}{(rn)^{r-1}} \right)^2= N \cdot \frac{(r-1)!\beta^{r+1}}{2r^r}
\end{align*}
and thus, for large $n$,
\begin{equation}\label{eq:expectationY2}
\E(Y) \ge  N \cdot \frac{(r-1)!\beta^{r+1}}{3r^r}.
\end{equation}

Next, we estimate the second summation in~\eqref{eq:Y2}. Note that for a fixed $I\in \binom{[N]}{r}$,
\[
\PP(X_I\ge t+1) \le \binom{a}{t+1}^r ((t+1)!)^{r-1} \frac{\alpha^{(r)}_{n-(t+1)}}{\alpha^{(r)}_n}
\sim \frac{a^{r(t+1)}}{(t+1)!} \frac{ (r!)^{t+1} n^{t+1}}{(rn)^{r(t+1)}}.
\]

Hence,
\[
\PP(X_I\ge t+1)=O_{r,t}\left(n^{-\frac{(r-1)(t+1)}{r+1}} \right)
\]
and consequently,
\[
\binom{a}{2} \sum_{I \in \binom{[N]}{r}} \PP(X_I\ge t+1)
= O_{r,t}\left(a^2 N^r n^{-\frac{(r-1)(t+1)}{r+1}} \right)
= O_{r,t}\left(n^{\frac{2(r-1)+2r-(r-1)(t+1)}{r+1}} \right).
\]
Since for any $r\ge 2$ and $t\ge4$,
\[
\frac{2(r-1)+2r-(r-1)(t+1)}{r+1} = \frac{(r-1)(3-t) + 2}{r+1} < \frac{2}{r+1},
\]
 we obtain, taking $t=4$,
\begin{equation}\label{eq:sumXt}
\binom{a}{2} \sum_{I \in \binom{[N]}{r}} \PP(X_I\ge 5) = o(N).
\end{equation}
Using \eqref{eq:expectationY2} and~\eqref{eq:sumXt} in~\eqref{eq:Y2}, this implies the bounds
\[
N \cdot \frac{(r-1)!\beta^{r+1}}{3r^r}\le \E(Y) \le \binom{4}{2} \E(W) + o(N)
\]
from which it follows that
\[
\E(W) \ge
N \cdot \frac{(r-1)!\beta^{r+1}}{20r^r}.
\]

After having estimated $\E(W)$ we move to the second major task which is to bound
$\PP(D=d)$ from above where, recall, $D$ is the degree of vertex 1 in $H$. 
Every edge of $H$ containing vertex 1 corresponds  to a set $I\in \binom{[N]}r$ with $1\in I$ and (not uniquely) to a pair of to-be  $I$-edges $e_1,e_2$ in $\rm_n$. The number of choices of the triple $(I,e_1,e_2)$ is $\binom{N-1}{r-1}\binom{a}{2}^{r} (2!)^{r-1}$ and the same bound applies to the $d-1$ remaining triples $(I',e_1,e_2')$. Of course, the edges selected to the triples should be vertex-disjoint as otherwise they could not all appear in a matching. As we bound from above, we may ignore this requirement. On the other hand, as the $H$-edges containing vertex 1 are not ordered, we need to divide by $d!$.

Consequently, very crudely,
\begin{align*}
\PP(D=d)\le\PP(D\ge d)
&\le \frac{1}{d!}\left( \binom{N}{r-1}\binom{a}{2}^{r} (2!)^{r-1} \right)^d \frac{\alpha^{(r)}_{n-2d}}{\alpha^{(r)}_n}\\
&\sim \frac{1}{d!} \left( \frac{N^{r-1}}{(r-1)!} \frac{a^{2r}}{2} \right)^d \frac{(rn-2rd)!}{(rn)!} \frac{n!}{(n-2d)!} (r!)^{2d}.
\end{align*}
\noindent Above we turned to estimating $\PP(D\ge d)$ instead of $\PP(D= d)$ to avoid the issue of producing incidentally more than $d$ edges of $H$ containing vertex 1.

Using the inequality $1-x\ge e^{-2x}$ valid for $x\le 1/2$, the fraction $\frac{(rn-2rd)!}{(rn)!}$ can be estimated as
\[
\frac{(rn-2rd)!}{(rn)!}=\frac1{(rn)^{2rd}\left(1-\frac1{rn}\right)\cdot\ldots\cdot\left(1-\frac{2rd-1}{rn}\right)}\le\frac{e^{4rd^2/n}}{(rn)^{2rd}}
\]
and the fraction $\frac{n!}{(n-2d)!}$ is trivially bounded by $n^{2d}$. Hence,
\[
\PP(D=d)\le \frac{1}{d!} \left( \frac{N^{r-1}}{(r-1)!} \frac{a^{2r}}{2} \right)^d \frac{e^{4rd^2/n}}{(rn)^{2rd}}\; n^{2d} (r!)^{2d}
= \frac{1}{d!} \left(\frac{N^{r-1}}{(r-1)!} \frac{a^{2r}}{2}  \frac{e^{4rd/n}}{(rn)^{2r}}\; n^{2} (r!)^{2} \right)d.
\]
Since $N^{r-1} a^{2r} \frac{n^2}{n^{2r}} = r^{r-1} \beta^{r+1}$ and $e^{4rd/n} \le 2$ as $d \le a/2=o(n)$, we get
\[
\PP(D=d)\le \frac{1}{d!} \left( \frac{1}{(r-1)!} \cdot \frac{1}{2} \cdot \frac{2}{r^{2r}} (r!)^2 \cdot r^{r-1} \beta^{r+1}\right)^d
= \frac{1}{d!} \left( \frac{\beta^{r+1} r!}{r^{r}} \right)^d.
\]

 %$Z_k=|\{i\in[N]: D_i=k\}|$.
 We are now ready to apply \eqref{EWW1} and get
\[
\E(W_1) = \E(W) -N\sum_{d\ge2}d\PP(D=d)
\ge N \left(\frac{(r-1)!\beta^{r+1}}{20r^r} - \sum_{d\ge 2} \frac{1}{(d-1)!} \left( \frac{\beta^{r+1} r!}{r^{r}}  \right)^d \right).
\]
Since $\frac{\beta^{r+1} r!}{r^{r}}=\frac{1}{20er^{r}} < 1$, we can bound
\[
\sum_{d\ge 2} \frac{1}{(d-1)!} \left( \frac{\beta^{r+1} r!}{r^{r}} \right)^d
\le \left( \frac{1}{20er^{r}}\right)^2 \sum_{d\ge 2} \frac{1}{(d-1)!}
\le \left(\frac{1}{20er^{r}} \right)^2 e,
\]
and thus,
\[
E(\nu(H))\ge\E(W_1) \ge N \cdot \frac{1}{20^2er^{r+1}} \left(1 - \frac{1}{r^{r-1}}\right)
=\Omega_r(N)=\Omega_r(n^{2/(r+1)}).
\]
Finally, owing to the permutational scheme of generating $\rm_n$, we are in a position to apply Theorem \ref{tala} with $h(\pi)=\nu(H)$. Let us check the assumptions.
For a permutation $\pi$ of $[rn]$, let $M(\pi)$ be the corresponding matching.
Observe that if $\pi_2$ is obtained from a permutation $\pi_1$ by swapping some two of its elements, then at most two edges of $M(\pi_1)$ can be destroyed and at most two edges of $M(\pi_1)$ can be created, and thus the same can be said about the $H$-edges of in $M(\pi_1)$. This, in turn, implies that the size of the largest matching in $H$ has been altered by at most two, that is, $|h(\pi_1)-h(\pi_2)|\le2$.
Moreover, to exhibit that $h(\pi)\ge s$, it obviously suffices to reveal $2s$ edges of $\rm_n$, that is, $2rs$ values of~$\pi$.

 Thus, Theorem \ref{tala} with $c=2$, $d=2r$, and $\eps=1/2$ yields that
 $$\PP(|\nu(H)-m|\ge m/2)\le4 \exp(-m/(1024r)).$$
 Moreover, there is a standard passage from the median to the expectation $\mu=\E(\nu(H))$.
 Indeed, we have (see for example \cite{Talagrand}, Lemma 4.6, or \cite{McDiarmid}, page 164)  that $|m -\mu|=O(\sqrt m)$. As demonstrated above, $\mu\to\infty$, so it follows that $m\to\infty$ and, in particular,  $|m-\mu|\le 0.01\mu$. This implies that $\PP(|\nu(H)-m|\ge m/2) = o(1)$ and
	\begin{align*}
		\PP(|\nu(H)-\mu|\ge(3/4)\mu)
		&= \PP(|\nu(H)-m + m-\mu|\ge(3/4)\mu)\\
		&\le \PP(|\nu(H)-m| + |m-\mu|\ge(3/4)\mu)\\
		&\le \PP(|\nu(H)-m| \ge(2/3)\mu)
		\le \PP(|\nu(H)-m|\ge m/2) = o(1),
	\end{align*}
which means that a.a.s.\ $\nu(H)=\Omega_r(\mu)=\Omega_r(n^{2/(r+1)})$, from which the existence of twins in $\rm_n$ of size $\Omega_r(n^{2/(r+1)})$ follows. \end{proof}

Notice that we could not apply Theorem \ref{tala} directly to  the random variable $W_1$, as in order to exhibit $s$ isolated edges of $H$,  one would need to reveal an unbounded number of values of $\pi$ -- to make sure that none of the  $r$-element subsets of $[N]$ intersecting the given $s$ forms an edge of $H$. In fact, for $r=2$, one could use instead the Azuma-Hoeffding inequality (see, e.g., \cite[Theorem 3.7]{DGR-match} or the references given there), avoiding the above issue with the witness assumption of Talagrand's inequality. Unfortunately, this does not work for $r\ge3$, as then we only have $\E(W_1)=\Omega(n^{2/(r+1)})$, so we do not know if $\left(\E(W_1)\right)^2/n$  tends to infinity which makes the Azuma-Hoeffding inequality useless.

\section{General matchings}\label{gen}

In this section we will give the proof of Theorem \ref{thm:deterministic}. We start with stating the main lemma together with some explanations of how it leads to the desired lower bound (\ref{eq:thm:deterministic}).

\subsection{The main lemma and its consequence}

In \cite[Lemma 3.4, $r=2$]{DGR-match} we showed the following lower bound on $t^{(2)}(n)$ in terms of $\tau(n)$. (Note that in \cite{DGR-match}, unlike here, $r$ meant multiplicity of twins.)

\begin{prop}\label{polyn}
	For all $3/5\le\alpha\le2/3$ and $\beta>0$, if $\tau(n)\ge \beta n^{\alpha}$ for all $n\ge2$, then $t^{(2)}(n)\ge \beta(n/4)^\alpha$ for all $n\ge2$.
\end{prop}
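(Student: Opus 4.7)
The plan is to reduce finding twins in $M$ to finding twins in a permutation extracted from $M$, so that the hypothesis on $\tau$ can be invoked directly.

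First, I split the vertex set $[2n]$ of $M$ at its midpoint into $L=[1,n]$ and $R=[n+1,2n]$, and partition the edges of $M$ into three classes: $E_L$ (edges entirely in $L$), $E_R$ (edges entirely in $R$), and $E_{LR}$ (bipartite edges, with one endpoint in each half). A vertex count based on $|L|=|R|=n$ gives $2|E_L|+|E_{LR}|=n=2|E_R|+|E_{LR}|$, hence $|E_L|=|E_R|=(n-|E_{LR}|)/2$.

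Second, I observe that the sub-matching $E_{LR}$ admits a canonical encoding as a permutation: listing its $L$-endpoints as $a_1<\dots<a_m$ (where $m=|E_{LR}|$) and setting $\sigma(i)\in[m]$ to be the rank of the $R$-partner of $a_i$ defines a permutation $\sigma$ on $[m]$. Because in $E_{LR}$ all $L$-endpoints precede all $R$-endpoints in $[2n]$, the order type of any sub-matching $E_{LR}|_I$ is determined entirely by the pattern $\sigma|_I$, and consequently any twins in $\sigma$ lift to twins in $E_{LR}$---and hence in $M$---of the same size. If $|E_{LR}|\ge n/2$, the hypothesis $\tau(m)\ge\beta m^\alpha$ then yields twins of size at least $\beta(n/2)^\alpha\ge\beta(n/4)^\alpha$, completing the argument in this case.

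The main obstacle is the complementary case $|E_{LR}|<n/2$, in which $|E_L|=|E_R|>n/4$. Here a direct induction on the sub-matching $M|_L$ loses an extra factor of $4$ inside the exponent at each descent, which the factor $(n/4)^\alpha$ in the conclusion does not absorb. I would close this case by combining two observations: iterating the midpoint splitting produces, at each successive level, either a bipartite sub-matching of $M$ of absolute size large enough to run the permutation reduction, or a further descent; and if every level of the recursion produces only a small bipartite class, then $M$ has uniformly small interval-graph clique number, forcing a large \emph{separated} (pairwise nonoverlapping) sub-matching whose two consecutive halves are automatically order-isomorphic and thus form the required twins. The exponent range $3/5\le\alpha\le2/3$ is precisely what is needed to balance these two extractions so that neither is wastefully small.
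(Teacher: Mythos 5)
Your reduction to permutations via the midpoint split is exactly the right first half of the argument, and the encoding of the bipartite class $E_{LR}$ as a permutation $\sigma$ of $[m]$, followed by lifting twins in $\sigma$ to twins in $M$, is correct. This matches the device used in the paper's Lemma~\ref{double_ind} (Subcase~2a) for $r=2$. The error is in where you set the threshold, and the resulting hole is real: your fallback narrative does not close it.

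Concretely, you split into \emph{$|E_{LR}|\ge n/2$} (permutation case) versus \emph{$|E_{LR}|<n/2$} (recursion case). With that split the recursion case gives only $|E_L|=|E_R|>n/4$ and hence $t^{(2)}(n)\ge 2\,t^{(2)}(n/4)$; by the induction hypothesis this is $\ge 2\cdot4^{-\alpha}\beta(n/4)^\alpha$, and the multiplicative gain $2\cdot4^{-\alpha}\ge1$ requires $\alpha\le1/2$, which fails throughout $3/5\le\alpha\le2/3$. You correctly diagnose that a factor is lost, but the remedy you sketch (iterating the midpoint split and invoking bounds on the interval-graph clique number to extract a separated sub-matching) is not a proof: the smallness of $E_{LR}$ at the top-level midpoint does not control the overlap at other split points, and after restricting to $E_L$ or $E_R$ you are counting overlaps inside a sub-matching, not inside $M$. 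There is no quantitative statement there that produces $\beta(n/4)^\alpha$.

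The repair is simply to choose the threshold correctly; no second mechanism is needed. Use the dichotomy \emph{$|E_{LR}|\ge n/4$} versus \emph{$|E_{LR}|<n/4$}. In the first case $\tau(|E_{LR}|)\ge\beta(n/4)^\alpha$ directly. In the second case $|E_L|=|E_R|=(n-|E_{LR}|)/2>3n/8$, and twins found in $M|_L$ and $M|_R$ concatenate, so
\[
t^{(2)}(n)\;\ge\;2\,t^{(2)}\!\bigl(\lceil 3n/8\rceil\bigr)\;\ge\;2\beta\Bigl(\tfrac{3n}{32}\Bigr)^{\alpha}\;=\;2\Bigl(\tfrac{3}{8}\Bigr)^{\alpha}\beta\Bigl(\tfrac{n}{4}\Bigr)^{\alpha}\;\ge\;\beta\Bigl(\tfrac{n}{4}\Bigr)^{\alpha},
\]
where the final inequality $2(3/8)^\alpha\ge1$ holds exactly for $\alpha\le\log_{8/3}2\approx0.7067$, which comfortably contains $[3/5,2/3]$. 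This is precisely the balancing the hypothesis $3/5\le\alpha\le2/3$ is designed for; it is not about trading off two different extraction mechanisms, but about tuning a single threshold so that both branches of one dichotomy deliver the target bound. (The paper itself cites this proposition from an earlier work, and its Lemma~\ref{double_ind} uses $n/3$-type thresholds for uniformity across $r$, which is why the paper explicitly notes that for $r=2$ the resulting Corollary~\ref{corol} bound is slightly weaker than Proposition~\ref{polyn}.)

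Two small further points: you should address base cases $n\in\{2,\dots,7\}$ (for $n\ge2$ one has $t^{(2)}(n)\ge1$ trivially, and the hypothesis $\tau(2)\ge\beta2^{\alpha}$ forces $\beta\le2^{-\alpha}$, so $\beta(n/4)^{\alpha}\le1$ for such $n$), and you should state the floor/ceiling bookkeeping explicitly since $3n/8$ need not be an integer.
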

\noindent It follows that  $t^{(2)}(n)=\Omega(\tau(n))$, but,  in fact, we have $t^{(2)}(n)=\Theta(\tau(n))$ (the upper bound is trivial -- see \cite[Section 3.1]{DGR-match} or \cite[Section 3]{DGR-Latin}).

Here we generalize Proposition \ref{polyn} for all values of $r$.
To this end, we first show a (doubly) iterative lower bound on $t^{(r)}(n)$ which also depends on $\tau(n)$. Set $\ell_2=n^{1/3}$ and, for every $r\ge3$, set $\ell_r(n)=\frac12n^{1/(2^r-1)}$.

\begin{lemma}\label{double_ind}
	For all $n\ge1$ and $r\ge2$,
	$t^{(r)}(n)\ge\min\{t_0,t_1,t_2\}$, where
	$$t_0=2t^{(r)}(n/3),\qquad t_1=\tau\left(\ell_{r-1}(n/6r)\right) ,$$
	and
	$$t_2=\min_{2\le p\le r-2}\max\left\{t^{(r-p)}\left(\ell_{p}(n/6r)\right), t^{(p)}\left(\ell_{r-p}(n/6r)\right)\right\}.$$
\end{lemma}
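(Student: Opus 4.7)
My plan is to prove Lemma~\ref{double_ind} by a case analysis based on how the edges of $M$ distribute across a carefully chosen tripartition of the ordered vertex set $[rn]$. First, I would split $[rn]$ into three consecutive intervals $A_1,A_2,A_3$ and classify each edge $e$ of $M$ by its profile $(|e\cap A_1|,|e\cap A_2|,|e\cap A_3|)$. Since there are only $\binom{r+2}{2}=O(r^2)$ possible profiles, a pigeonhole argument produces a dominant profile realized by at least $n/(6r)$ edges, which is what ultimately accounts for the constant $1/(6r)$ appearing in the lemma (together with any preliminary reduction excluding atypical cases).

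For the case yielding $t_0$, suppose that at least $n/3$ edges lie entirely inside $A_1$ and at least $n/3$ lie entirely inside $A_3$. Applying the induction hypothesis to each confined sub-matching produces twins of size $t^{(r)}(n/3)$ in each. Because $A_1$ precedes $A_3$ in the linear order, the two twin pairs concatenate into a single pair of order-isomorphic $r$-matchings, yielding twins of size $2\,t^{(r)}(n/3)=t_0$.

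Otherwise, the dominant profile has at least two positive coordinates; for simplicity I focus on profiles of the form $(p,0,r-p)$ for some $1\le p\le r-1$, with mixed profiles (having a positive $A_2$-coordinate) reducing to this by absorbing middle-block vertices into a refined bipartition. Projecting the $\ge n/(6r)$ selected edges onto $A_1$ and onto $A_3$ produces a naturally bijected pair: an ordered $p$-matching and an ordered $(r-p)$-matching. When $p=1$ (or symmetrically $p=r-1$), the $1$-side is a sequence of single vertices---i.e.\ a permutation---while the $(r-1)$-side is an $(r-1)$-matching. Invoking the Sauermann--Zakharov clique theorem on the $(r-1)$-matching extracts a homogeneous clique of size $\ell_{r-1}(n/(6r))$, and the companion permutation on the complementary side then contains twins of length $\tau(\ell_{r-1}(n/(6r)))$ by the Bukh--Rudenko result. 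Clique homogeneity automatically makes the two halves isomorphic on the matching side, so these permutation twins lift to twins of the full $r$-matching, giving $t_1$. When $2\le p\le r-2$, both projections are proper matchings; applying Sauermann--Zakharov to the $p$-side yields a $p$-clique of size $\ell_p(n/(6r))$, after which the induction hypothesis applied to the complementary $(r-p)$-sub-matching produces twins of size $t^{(r-p)}(\ell_p(n/(6r)))$ that lift to twins in $M$. The symmetric argument gives twins of size $t^{(p)}(\ell_{r-p}(n/(6r)))$; taking the larger of the two and then the worst $p$ yields $t_2$.

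I expect the principal difficulty to be the lifting step in the $t_1$ and $t_2$ cases: once one projection is a homogeneous Sauermann--Zakharov clique and the other carries twins, one must verify that the two halves of the corresponding \emph{full} $r$-edges are indeed order-isomorphic as $r$-matchings. The clique property---every pair of edges of the chosen sub-matching being pairwise isomorphic as a $2$-matching---is precisely what guarantees this, but it requires checking that the linear orders on the two projections interact coherently. A secondary obstacle is the treatment of ``mixed'' profiles with a nonzero middle coordinate: these should reduce to the cases above, but bookkeeping the constants carefully enough to land at exactly $n/(6r)$ (rather than some weaker bound that would degrade $\ell_p$ and $\ell_{r-p}$) is the most technically delicate part of the argument.
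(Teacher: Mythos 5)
Your high-level strategy (partition the vertex line, classify edges by intersection profile, pigeonhole, then invoke Sauermann--Zakharov on one side and recurse or apply $\tau$ on the other, lifting via clique homogeneity) is the same as the paper's, and your treatment of the $t_1$ and $t_2$ cases correctly captures the key lifting mechanism. However, the choice of a \emph{tripartition} rather than a \emph{bipartition} introduces gaps that you acknowledge but do not resolve, and they are not merely bookkeeping.

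First, the case yielding $t_0$ does not follow from a tripartition pigeonhole. You posit ``at least $n/3$ edges lie entirely inside $A_1$ and at least $n/3$ lie entirely inside $A_3$,'' but the negation of this is not ``the dominant profile has at least two positive coordinates'': a large fraction of edges could sit entirely inside the middle block $A_2$, and then neither the concatenation argument nor the crossing argument applies. The paper avoids this by splitting $[rn]$ into two equal halves $H_1,H_2$ and classifying edges by $p=|e\cap H_1|$. The balance identity $\sum_p p\,n_p=\lfloor rn/2\rfloor$ (and its mirror) forces $\max\{n_0,n_r\}\le n/2$, which is exactly what makes the dichotomy clean: either $\min\{n_0,n_r\}\ge n/3$ (concatenate), or $n_0+n_r\le 5n/6$, so crossing edges number at least $n/6$. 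There is no ``middle'' to worry about. Second, your pigeonhole over $\binom{r+2}{2}=\Theta(r^2)$ tripartite profiles gives a dominant class of size $\Omega(n/r^2)$, not $n/(6r)$; the paper's bipartition has only $r-1$ nontrivial profiles, and pigeonhole over those after the $n/6$ reduction yields $n/(6(r-1))\ge n/(6r)$. Third, ``absorbing middle-block vertices into a refined bipartition'' is doing a lot of hidden work: once you merge $A_2$ into $A_1$ (say), an edge with profile $(q_1,q_2,q_3)$ becomes an edge with $q_1+q_2$ vertices on the left and $q_3$ on the right, but different original profiles collapse to the same $p$, so the homogeneity you would need from a Sauermann--Zakharov clique on the left projection would have to be re-established from scratch --- at which point you have simply rediscovered the bipartition. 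In short, the bipartition is not an arbitrary simplification; it is what makes Case~1 exist at all and what lands the constant at $n/(6r)$.
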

\noindent For $r\le3$, $t_2$ is not defined, or, to the same effect, we may set it equal to $n$ in such cases, so that it does not affect the minimum.

From this we may derive recursively lower bounds on $t^{(r)}(n)$ in terms of $n$ and $r$, assuming a lower bound on $\tau(n)$. For $r\ge1$, let $\eta_r=\frac1{2^{r}-1}$, so that now we can write $\ell_r(n)=\frac12n^{\eta_r}$.
\begin{cor}\label{corol}
	For all $3/5\le\alpha\le2/3$ and $\beta>0$, if $\tau(n)\ge\beta n^\alpha$ for all $n\ge 2$, then for all $r\ge2$ and all $n\ge 2$,
	\begin{equation}\label{boundont}
		t^{(r)}(n)\ge\beta_r(n/6r)^{\alpha\eta_{r-1}},
	\end{equation}
	where $\beta_2=\beta$, while for $r\ge3$,
	$$\beta_r=\min\left\{\min_{2\le p\le r-2}\beta_p(12r)^{-\alpha\eta_{p-1}},\; \beta2^{-\alpha}\right\}.$$
In particular, $t^{(3)}(n)\ge\beta_3(n/18)^{\alpha/3}$, where $\beta_3=\beta2^{-\alpha}$.
\end{cor}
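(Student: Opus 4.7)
The plan is a double induction: outer on $r$, inner on $n$. The base case $r = 2$ is immediate from Proposition~\ref{polyn}, since $\eta_1 = 1$ makes the target $\beta_2(n/12)^{\alpha}$ weaker than the given bound $\beta(n/4)^{\alpha}$. For $r \ge 3$, assuming the bound for all $r' < r$, one runs an inner induction on $n$ and invokes Lemma~\ref{double_ind}, reducing the task to verifying $t_i \ge \beta_r (n/6r)^{\alpha \eta_{r-1}}$ for each of $i = 0, 1, 2$ separately.

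The first two terms are routine. For $t_0 = 2\,t^{(r)}(n/3)$, the inner hypothesis gives $t_0 \ge 2\beta_r (n/18r)^{\alpha\eta_{r-1}}$, so the bound reduces to $2 \ge 3^{\alpha\eta_{r-1}}$; this holds because $\alpha\eta_{r-1} \le (2/3)(1/3) = 2/9$ when $r \ge 3$. For $t_1 = \tau(\ell_{r-1}(n/6r))$, the form $\ell_{r-1}(n/6r) = \tfrac12(n/6r)^{\eta_{r-1}}$ combined with the hypothesis $\tau(m) \ge \beta m^\alpha$ yields $t_1 \ge \beta 2^{-\alpha}(n/6r)^{\alpha\eta_{r-1}}$, which matches the target since $\beta_r \le \beta 2^{-\alpha}$ by definition.

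The term $t_2$ requires a bit more care. For each $p \in [2,r-2]$, applying the outer hypothesis to (say) $t^{(r-p)}(\ell_p(n/6r))$ yields
\[
t^{(r-p)}(\ell_p(n/6r)) \ge \beta_{r-p}(12(r-p))^{-\alpha\eta_{r-p-1}}(n/6r)^{\alpha\eta_p\eta_{r-p-1}}.
\]
The key algebraic fact is $\eta_p\eta_{r-p-1} \ge \eta_{r-1}$, which follows from $(2^p-1)(2^{r-p-1}-1) = 2^{r-1} - 2^p - 2^{r-p-1} + 1 < 2^{r-1} - 1$ for $2 \le p \le r-2$. Hence $(n/6r)^{\alpha\eta_p\eta_{r-p-1}} \ge (n/6r)^{\alpha\eta_{r-1}}$ provided $n \ge 6r$, while the substitution $q = r-p$ in the definition of $\beta_r$, combined with $(12(r-p))^{-\alpha\eta_{r-p-1}} \ge (12r)^{-\alpha\eta_{r-p-1}}$, gives $\beta_{r-p}(12(r-p))^{-\alpha\eta_{r-p-1}} \ge \beta_r$. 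Each entry of the max, and hence the min over $p$, is thus at least $\beta_r(n/6r)^{\alpha\eta_{r-1}}$. For $r = 3$ the $t_2$ term is vacuous, so the constraint from $t_1$ alone forces $\beta_3 = \beta 2^{-\alpha}$, matching the stated value.

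The main obstacle I foresee is the base case of the inner induction on $n$: the recursion $t_0 = 2\,t^{(r)}(n/3)$ eventually descends below $n = 6r$, where $(n/6r)^{\alpha\eta_{r-1}} < 1$. The bound there must be either verified by hand or absorbed by a suitably small choice of $\beta_r$, making the inequality vacuous in that regime. The rest is bookkeeping with the exponents $\eta_r = 1/(2^r-1)$ and careful tracking of which $\beta_{r'}$ appears in each branch of the recurrence.
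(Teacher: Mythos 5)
Your proof follows the same double-induction scheme as the paper, with matching case analysis and the same key inequalities ($2\ge 3^{\alpha\eta_{r-1}}$, $\beta_r\le\beta 2^{-\alpha}$, and $\eta_p\eta_{r-p-1}\ge\eta_{r-1}$), so the substance is correct. The obstacle you flag is resolved exactly as you anticipate: for $2\le n\le 6r(1/\beta_r)^{1/(\alpha\eta_{r-1})}$ the right-hand side of~\eqref{boundont} is at most $1$, and $t^{(r)}(n)\ge 1$ holds trivially for $n\ge 2$; no extra shrinking of $\beta_r$ is needed. One detail you gloss over is that, once $n$ is past this threshold, you must check that every argument fed into the recurrence of Lemma~\ref{double_ind} (namely $n/3$, $\ell_{r-1}(n/6r)$, $\ell_p(n/6r)$, $\ell_{r-p}(n/6r)$) is at least $2$, since both the hypothesis $\tau(m)\ge\beta m^\alpha$ and the inductive bound on $t^{(r')}$ are only assumed for arguments $\ge 2$; this follows because $\beta_r(n/6r)^{\alpha\eta_{r-1}}\ge 1$ combined with $\beta\le 2^{-\alpha}$ (from the hypothesis at $n=2$) forces $\tfrac12(n/6r)^{\eta_{r-1}}\ge 2$, and since $\eta_{r-1}\le\eta_p$ for $p\le r-1$ and $n/6r>1$, all the other arguments are at least as large.
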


For technical reasons we have not pulled out the constant $(6r)^{-\alpha\eta_{r-1}}$ and incorporated it into $\beta_r$ above.
Note also, that just for the sake of unification, for $r=2$ the bound in Corollary \ref{corol} is slightly weaker than the bound in Proposition \ref{polyn}. The case $r=2$ is special also in that we have $t^{(2)}(n)=\Theta(\tau(n))$ (the upper bound is trivial -- see \cite[Section 3.1]{DGR-match}). On the other hand, for $r\ge3$ the lower bound in Corollary \ref{corol} does not seem to be close to the truth.

Applying the above mentioned bound $\tau(n)=\Omega\left( n^{3/5}\right)$, we obtain immediately that $$t^{(r)}(n)=\Omega\left( n^{\frac35\eta_{r-1}}\right)$$holds for all $r\ge2$. This coincides with (\ref{eq:thm:deterministic}) and  proves Theorem \ref{thm:deterministic}. (For $r=2$ it was already deduced in \cite[Corollary 3.5]{DGR-match}.) So, to complete the proof of Theorem \ref{thm:deterministic}, it remains to prove Lemma~\ref{double_ind} and Corollary~\ref{corol}.

\subsection{Unavoidable patterns}

Given $r\ge2$, there are exactly $\tfrac12\binom{2r}r$ ways, called \emph{patterns}, in which a pair of disjoint edges of order $r$ may intertwine on an ordered vertex set.  
We call them $r$-patterns if the order $r$ is to be emphasized. For instance, using convenient letter notation, there are just three patterns for $r=2$, namely $AABB,ABBA,ABAB$ and ten for $r=3$: $AAABBB$, $AABABB$, $AABBBA$, $AABBAB$, $ABBBAA$, $ABBAAB$, $ABBABA$,  $ABAABB$, $ABABBA$, $ABABAB$. For a pattern $P$, a \emph{$P$-clique} is defined as a matching whose all pairs of edges form pattern $P$. For example, with $P=ABAABB$, a $P$-clique is a 3- matching with the  structure
$A_1\cdots A_n\; A_1A_1\cdots A_nA_n.$
Let $L_P(M)$ be the size of the largest $P$-clique in a matching $M$, $L(M)=\max_PL_P(M)$, and $L_r(n)=\min_ML(M)$, where the minimum is taken over all $r$-matchings $M$ of size $n$.

In \cite{DGR-match} (see also \cite{DGR-Latin}) we showed that $L_2(n)= \lfloor n^{1/3}\rfloor$ and used this result (the lower bound) to prove Proposition \ref{polyn}. Very recently, building upon the concepts and results contained in \cite{DGR-rM} and in \cite{AJKS},
 Sauerman and Zakharov  proved in \cite{SZ}  the following lower bound for every $r$. Recall our notation from the previous subsection:  $\ell_r(n)=\frac12n^{1/(2^r-1)}$ and $\eta_r=\frac1{2^{r}-1}$

\begin{thm}[\cite{SZ}]\label{Lisa}
For all $r\ge2$ and all $n\ge1$, we have $L_r(n)\ge\ell_r(n)$.
\end{thm}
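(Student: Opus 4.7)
The plan is to proceed by induction on $r$, with the base case $r=2$ furnished by the bound $L_2(n)\ge\lfloor n^{1/3}\rfloor$ from~\cite{DGR-match}, which easily implies $L_2(n)\ge\ell_2(n)$. The inductive step rests on the algebraic identity $\eta_r=\eta_{r-1}/(2+\eta_{r-1})$, equivalently $1/\eta_r=2/\eta_{r-1}+1$. This identity dictates the desired reduction: from an $r$-matching $M$ of size $n$ one should extract an $(r-1)$-matching of size roughly $m=n^{1/(2+\eta_{r-1})}$ on which the inductive hypothesis then yields a $P'$-clique of size $\tfrac12 m^{\eta_{r-1}}=\tfrac12 n^{\eta_r}$.

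Concretely, given $M$ with edges $e_1,\dots,e_n$ ordered by first vertex and written as $e_i=(v_{i,1}<\dots<v_{i,r})$, for each ordered pair $i<j$ the position of $v_{j,r}$ in one of the $r+1$ gaps of $e_i$ (or beyond $v_{i,r}$) is a label in a finite alphabet, and the interleaving pattern of $e_i,e_j$ is determined by these per-coordinate labels. The first step I would take is to homogenise the ``last-vertex'' label across all pairs simultaneously, via an iterated Erd\H os--Szekeres/Dilworth argument on an auxiliary partial order on $[n]$, extracting a sub-matching $M'$ of size $\gtrsim m$. Deleting $v_{i,r}$ from each edge of $M'$ yields an $(r-1)$-matching $M''$ to which induction applies. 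Because the extraction was performed so as to fix the relative position of the deleted vertex, the $P'$-clique produced by induction lifts back uniquely to a $P$-clique of the same size for some $r$-pattern $P$ in $M$, completing the step.

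The principal obstacle is calibrating the first extraction to yield loss \emph{exactly} $m=n^{1/(2+\eta_{r-1})}$ rather than the weaker $m=\sqrt n$ that a single crude application of Erd\H os--Szekeres to a monotone subsequence would give. A naive iteration of square-root losses across the $r$ coordinates would produce the exponent $1/2^{r-1}$ instead of the required $1/(2^r-1)$; eliminating this slack requires controlling several coordinates simultaneously through a Ramsey-type density argument, in the spirit of the interval-hypergraph techniques cited by Sauermann and Zakharov from~\cite{DGR-rM} and~\cite{AJKS}. I expect this precise balancing to be the technical heart of the proof, and the reason why a pattern on $r$ vertices (with $2^r-1$ non-empty subsets governing the relative positions) enters the exponent.
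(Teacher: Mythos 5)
The paper itself contains no proof of Theorem~\ref{Lisa}: it is imported as a black box from Sauermann and Zakharov~\cite{SZ}, with only the remark that their argument builds on~\cite{DGR-rM} and~\cite{AJKS}. So there is no ``paper's own proof'' against which to compare. Judged on its own terms, what you have written is a plan, not a proof, and you concede as much in the final paragraph, where the step you call the ``technical heart'' --- simultaneously homogenizing the positional labels of the deleted vertices across all pairs with a precisely calibrated loss --- is explicitly left unresolved. That is exactly the step where the exponent $1/(2^r-1)$ would have to be earned, so the sketch does not establish the theorem.

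There is also a confusion in how you frame the difficulty. You write that you need an extraction of size $m=n^{1/(2+\eta_{r-1})}$ and that the $m=\sqrt n$ coming from a single Erd\H os--Szekeres pass is ``weaker.'' But $1/(2+\eta_{r-1})=(2^{r-1}-1)/(2^r-1)<1/2$, so $n^{1/(2+\eta_{r-1})}<\sqrt n$: if a single Erd\H os--Szekeres step actually produced a sub-matching of size $\sqrt n$ with the required homogeneity, your induction would yield $\tfrac12 n^{\eta_{r-1}/2}=\tfrac12 n^{1/(2^r-2)}$, which is \emph{stronger} than the target. The genuine obstacle is that a single monotone-subsequence extraction fixes only one binary comparison, while the data you must make constant across all $\binom m2$ pairs (which gap, in either direction, the deleted vertex occupies relative to the other edge) is richer; iterating Erd\H os--Szekeres once per piece of data gives a much smaller $m$, and it is neither shown nor obvious that the cost can be brought down to exactly $\eta_r/\eta_{r-1}$. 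You have correctly identified the algebraic recursion $1/\eta_r=2/\eta_{r-1}+1$ that any successful induction on $r$ would have to realize, but identifying the target exponent is not the same as achieving it; a proof would require either reproducing the actual Sauermann--Zakharov argument or supplying a concrete extraction lemma with the stated quantitative guarantee, and your sketch provides neither.
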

\noindent  In fact, in \cite{SZ} there is a better constant than $\tfrac12$ in front of $n^{\eta_r}$ which depends on $r$ and goes to 1 as $r\to\infty$. For our purposes this has very little effect and therefore we stick to the weaker but easier to handle $1/2$.

\subsection{Proof of Lemma \ref{double_ind}}

Let $M=M^{(r)}(n)$ be an ordered $r$-matching on $[rn]=H_1\cup H_2$, where $H_1=\{\lfloor rn/2\rfloor\}$ and $H_2=[rn]\setminus H_1$ are the first and second ``half'' of the vertex set. Further, for $p=0,1,2,\dots, r-2,r-1,r$, let
$$n_p=|\{e\in M\;: |e\cap H_1|=p\}.$$
 Note that $\sum_{p=0}^rn_p=n$, while $\sum_{p=0}^rpn_p=\lfloor rn/2\rfloor$ and, by symmetry, $\sum_{p=0}^r(r-p)n_p=\lceil rn/2\rceil$. The latter identities imply that $\max\{n_r,n_0\}\le n/2$. We consider three cases (more precisely, two cases one of which splits further into two subcases)  with respect to the values of $n_p$. For two edges $e,f\in M$ we write $e<f$ whenever the leftmost vertex of $e$ is to the left of the leftmost vertex of $f$, i.e., $\min e<\min f$.

\medskip

{\bf Case 1: $\min\{n_0,n_r\}\ge n/3$.} Let $M_r$ and $M_0$ be the sub-matchings of $M$ consisting of the edges contained in, respectively, $H_1$ and $H_2$. Then, for $i=0,r$, we have $|M_i|\ge n/3$, so $t^{(r)}(M_i)\ge t^{(r)}(n/3)$. Thus, by concatenation, $t^{(r)}(M)\ge2 t^{(r)}(n/3)$.

\medskip

{\bf Case 2: $\min\{n_0,n_r\}\le n/3$.} In this case, $n_0+n_r\le n/2+n/3=5n/6$, and so
$$\sum_{p=1}^{r-1}n_p\ge n/6.$$ Thus, we may consider two subcases (for simplicity we compromise $r-1$ to $r$ in all denominators).

\smallskip

{\bf Subcase 2a: $\max\{n_1,n_{r-1}\}\ge n/(6r)$.} W.l.o.g. we assume that $n_{r-1}\ge n/(6r)$. Let $M_{r-1}\subset M$ consist of all $n_{r-1}$ edges of $M$ which intersect $H_1$ in exactly $r-1$ vertices (and thus they intersect $H_2$ in just one vertex). For each $e\in M_{r-1}$ we write $e=e^*\cup\{v_e\}$, where $e^*\subset H_1$ and $v_e\in H_2$. Let $M^*=\{e^*\;: e\in M_{r-1}\}$. Note that $|M^*|=|M_{r-1}|=n_{r-1}$.
By applying Theorem \ref{Lisa} to the $(r-1)$-matching $M^*$ we conclude that for some $(r-1)$-pattern $P$ there is in $M^*$ a $P$-clique $M^*_P=\{e^*_1<\cdots<e^*_m\}$ of size $m=L_{r-1}(n_{r-1})$. Set $v_i:=v_{e_i}$, for convenience.

Let $\pi=v_1,\ldots, v_m$ be the permutation of the right ends of the edges of $M^*_P$ and let $\pi'=v_{i_1},\ldots, v_{i_t}$ and $\pi''=v_{j_1},\ldots, v_{j_t}$ be the longest twins in $\pi$. Then, we claim that $M'=\{e_{i_1}<\cdots<e_{i_t}\}$ and $M''=\{e_{j_1}<\cdots<e_{j_t}\}$ are twins in $M$. Indeed, for $1\le g<h\le t$, consider two pairs of edges $(e_{i_g},e_{i_h})\in M'$ and $(e_{j_g},e_{j_h})\in M''$. The first $r-1$ vertices of $e_{i_g}$ and $e_{i_h}$, i.e., $e^*_{i_g}$ and $e^*_{i_h}$, as well, as $e^*_{j_g}$ and $e^*_{j_h}$ form the same pattern $P$ (as they all belong to the clique $M^*_P$). Moreover, the pairs of rightmost vertices, respectively,  $v_{i_g}$, $v_{i_h}$  and $v_{j_g}$, $v_{j_h}$, as being at the same positions in the twins $\pi'$ and $\pi''$, are in the same relation:  ($v_{i_g}<v_{i_h}$  and $v_{j_g}<v_{j_h}$) or ($v_{i_g}>v_{i_h}$  and $v_{j_g}>v_{j_h}$).

As an example, consider a special case when $r=5$, $p=4$, and $P=ABBBAABA$. Let $e_A,e_B,e_C$, $e_D$ be four edges whose first four vertices form mutually pattern $P$, while the last vertices satisfy $v_A<v_B$ and $v_C<v_D$. Then $e_A$ and $e_B$ form the pattern $ABBBAABA|AB$, while $e_C$ and $e_D$ - pattern $CDDDCCDC|CD$ which is the very same pattern indeed.

This shows that $t^{(r)}(M)\ge \tau\left(L_{r-1}(n_{r-1})\right)\ge \tau\left(L_{r-1}(n/6r)\right)$.

\smallskip

{\bf Subcase 2b: $\max_{2\le p\le r-2}n_p\ge n/(6r)$.} Let, for some $2\le p\le r-2$,\; $n_p\ge n/(6r)$. Let  $M_{p}\subset M$ consist of all $n_{p}$ edges of $M$ which intersect $H_1$ in exactly $p$ vertices (and thus they intersect $H_2$ in $r-p$ vertices). For each $e\in M_{p}$ we write $e=e^*\cup e^{**}$, where $e^*=e\cap H_1$ and $e^{**}=e\cap H_2$. Let $M^*=\{e^*\;: e\in M_{p}\}$. Note that $|M^*|=|M_{p}|=n_{p}$.
By applying Theorem \ref{Lisa} to the $p$-matching $M^*$ we conclude that for some $p$-pattern $P$ there is in $M^*$ a $P$-clique $M^*_P=\{e^*_1<\cdots<e^*_m\}$ of size $m\ge L_p(n_p)$.

Let $M^{**}=\{e^{**}_i:\; i=1,\dots,m\}$ be the $(r-p)$-matching in $H_2$ consisting of the remainders of the edges in $M^*_P$. Further, let $M^{**'}=\{e^{**}_{i_1}<\cdots<e^{**}_{i_t}\}$ and $M^{**''}=\{e^{**}_{j_1}<\cdots<e^{**}_{j_t}\}$ be the largest twins in $M^{**}$. Then, we claim that $M'=\{e_{i_1}<\cdots<e_{i_t}\}$ and $M''=\{e_{j_1}<\cdots<e_{j_t}\}$ are twins in $M$. Indeed, for $1\le g<h\le t$, consider two pairs of edges $(e_{i_g},e_{i_h})\in M'$ and $(e_{j_g},e_{j_h})\in M''$. The $p$-long prefixes of $e_{i_g}$ and $e_{i_h}$, i.e. $e^*_{j_g}$ and $e^*_{j_h})$ form pattern $P$, as the prefixes  $e^*_{j_g}$ and $e^*_{j_h}$ do. Moreover, the $(r-p)$-element suffixes $e^{**}_{i_g}$ and $e^{**}_{i_h}$ form the same $(r-p)$-pattern $Q$ as $e^{**}_{j_g}$ and $e^{**}_{j_h}$ do. So, the pairs  $(e_{i_g},e_{i_h})$ and $(e_{j_g},e_{j_h})$ form the same $r$-pattern $R$, which proves that $M'$ and $M''$ are twins.

To illustrate this part of the proof, consider a special case when $r=5$ and $p=3$. Let $P=AABBBA$ be a collectible 3-pattern and let $Q=ABBA$. If four edges, $e_A,e_B,e_C$, $e_D$  each have 3 vertices in $H_1$ which mutually form pattern $P$ and, moreover, the 2-vertex remainders of $e_A$ and $e_B$ form in $H_2$ pattern $Q$, and the same  holds for the 2-vertex remainders of $e_C$ and $e_D$, then edges $e_A$ and $e_B$ from overall pattern $AABBBA ABBA$, while edges $e_C$ and $e_D$ form pattern $CCDDDCCDDC$, which is the very same pattern (we call it $R$ in our proof).

Hence,
$$t^{(r)}(M)\ge t^{(r-p)}(m)\ge t^{(r-p)}\left(L_p(n_p)\right).$$
We could repeat the entire argument in Subcase 2b with the roles of $p$ and $r-p$ swapped yielding the $\max$ in the definition of $t_2$. Finally, we have to minimize the obtained bound over all $2\le p\le r-2$ and over all three subcases. \qed

\subsection{Proof of Corollary \ref{corol}}

The proof is by double induction on $r\ge2$ and $n\ge 2$. The case $r=2$ was proved in \cite{DGR-match}, see Proposition \ref{polyn} above. Indeed, by Proposition \ref{polyn} we have
$$t^{(2)}(n)\ge \beta(n/4)^\alpha\ge\beta(n/12)^\alpha=\beta_2(n/12)^\alpha.$$
 Moreover, for $r\ge3$, if $2\le n\le6r(1/\beta_r)^{\frac1{\alpha\eta_{r-1}}}$, then the desired bound becomes $t^{(r)}(n)\ge1$ which is trivially true.
Thus, assume that $r\ge3$, $n\ge6r(1/\beta_r)^{\frac1{\alpha\eta_{r-1}}}$, equivalently,
\begin{equation}\label{assonn}
\beta_r(n/6r)^{\alpha\eta_{r-1}}\ge1
\end{equation}
and that \eqref{boundont} holds for all $2\le r'<r$ and $2\le n'<n$.

Note that the assumption $\tau(n)\ge\beta n^\alpha$ for all $n\ge 2$ implies, by taking $n=2$, that $1\ge \beta 2^{\alpha}$, or $\beta\le 2^{-\alpha}$. Thus, whenever \eqref{assonn} holds, recalling that $\beta_r\le2^{-\alpha}\beta$, it follows that
$$\tfrac12(n/6r)^{1/\eta_{r-1}}\ge\tfrac12\beta_r^{-1/\alpha}\ge\beta^{-1/\alpha}\ge2.$$
This, in turn, implies that $n/6r>1$ and, consequently, that in Lemma \ref{double_ind}  all arguments appearing in the functions defining $t_0$, $t_1$, and $t_2$  are larger or equal to 2 (and, obviously, less than $n$) allowing to apply  the induction assumptions.

By Lemma \ref{double_ind}, we know that $t^{(r)}(n)\ge\min\{t_0,t_1,t_2\}$. So, consider three cases.

\medskip

{\bf Case 0: $t^{(r)}(n)\ge t_0$.} As $n/3\ge2$, by induction's assumption we have
$$t^{(r)}(n)\ge t_0=2t^{(r)}(n/3)\ge2\beta_r(n/18r)^{\alpha\eta_{r-1}}\ge \beta_r(n/6r)^{\alpha\eta_{r-1}},$$
the last inequality equivalent to $2^{1/\eta_{r-1}}=2^{2^{r-1}-1}\ge3^\alpha$, which is true for $r\ge3$ (recall that $\alpha\le1$).

\medskip

{\bf Case 1: $t^{(r)}(n)\ge t_1$.} In this case, by the assumption,
$$t^{(r)}(n)\ge t_1=\tau\left(\tfrac12(n/6r)^{\eta_{r-1}}\right)\ge \beta2^{-\alpha}(n/6r)^{\alpha\eta_{r-1}}\ge\beta_r(n/6r)^{\alpha\eta_{r-1}},$$
since $\beta_r\le2^{-\alpha}\beta$.

\medskip

{\bf Case 2: $t^{(r)}(n)\ge t_2$.} By induction's assumption,

\begin{align*}
t^{(r)}(n)\ge t_2 &=\min_{2\le p\le r-2}\max\left\{t^{(r-p)}\left(\tfrac12(n/6r)^{\eta_p}\right),\; t^{(p)}\left(\tfrac12(n/6r)^{\eta_{r-p}}\right)\right\}\\
&\ge \min_{2\le p\le r-2}\max\left\{\beta_{r-p}\left(\frac{(n/6r)^{\eta_p}}{12(r-p)}\right)^{\alpha\eta_{r-p-1}},\; \beta_p\left(\frac{(n/6r)^{\eta_{r-p}}}{12p}\right)^{\alpha\eta_{p-1}}\right\}\\
&\ge\min_{2\le p\le r-2}\max\left\{\beta_{r-p}(12r)^{-\alpha\eta_{r-p-1}},\;\beta_p(12r)^{-\alpha\eta_{p-1}}\right\}(n/6r)^{\alpha\eta_{r-1}}\\
&\ge\beta_r(n/6r)^{\alpha\eta_{r-1}},
\end{align*}
by choosing $\beta_r\le\min_{2\le p\le r-2}\beta_p(12r)^{-\alpha\eta_{p-1}}.$ (Above we used the facts that $n/6r>1$ and $\eta_p\eta_{r-p-1}>\eta_{r-1}$, both with a big margin.)
\qed

\section{Concluding remarks}

Let us conclude the paper with some problems for future considerations. Firstly, it is natural to speculate on the true asymptotic order of the function $t^{(r)}(n)$. Based on Theorem~\ref{thm:random} and the former results around Gawron's conjecture \cite{Gawron} on twins in permutations (see also \cite{DGR}), we dare to state the following.

\begin{conj}\label{conj main}
	For every $r\geqslant 2$, $t^{(r)}(n)=\Theta\left(n^{\frac{2}{r+1}}\right)$.
\end{conj}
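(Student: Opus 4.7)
The upper bound $t^{(r)}(n) = O(n^{2/(r+1)})$ is already the content of the upper-bound half of Theorem \ref{thm:random}, so the conjecture reduces to the matching lower bound $t^{(r)}(n) = \Omega(n^{2/(r+1)})$. For $r=2$, Proposition \ref{polyn} together with the probabilistic upper bound $\tau(n)=O(n^{2/3})$ shows that this is equivalent, up to constants, to Gawron's conjecture $\tau(n)=\Theta(n^{2/3})$, which has been open since \cite{Gawron}. Any realistic attack on Conjecture \ref{conj main} must therefore contain, or deliberately circumvent, a resolution of Gawron's conjecture as a subroutine.

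The plan for the $r=2$ case is to sharpen the Bukh and Rudenko construction via a two-scale scheme: partition the host permutation into $\Theta(n^{1/3})$ blocks of length $\Theta(n^{2/3})$, pigeon-hole the blocks by their coarse relative order pattern, and inside each pattern class iterate the argument to produce nested twins. To close the gap to the first-moment upper bound one needs a concentration statement asserting that no coarse block pattern is over-represented too strongly; this is where a dependent-random-choice or entropy-compression scheme tailored to ordered objects would have to be brought to bear. The resulting improved $\tau(n)$ would, via Proposition \ref{polyn}, immediately settle the $r=2$ case of the conjecture.

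For $r \ge 3$, assuming a sharp bound on $\tau(n)$, the strategy would be to replace the appeal to Theorem \ref{Lisa} inside Lemma \ref{double_ind} by a \emph{density} analogue of the Sauermann--Zakharov theorem: in any $r$-matching of size $n$, one of the $\tfrac12\binom{2r}{r}$ intertwining patterns $P$ must be realised by a positive fraction of all pairs of edges, whence pigeon-holing should yield a sub-matching $M_P$ of size $\Omega(n)$ which is locally ``$P$-pseudorandom''. Within such an $M_P$ the variance computation carried out in Section \ref{rm} would apply essentially verbatim, since the uniform-randomness hypothesis is only used there to force pairs of edges to distribute evenly across patterns, and by construction this is already true inside $M_P$. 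The main obstacle, beyond Gawron's conjecture itself, is the passage from ``positive density of $P$-pairs'' to a usable structural sub-matching: every hypergraph Ramsey tool of the Sauermann--Zakharov type incurs a $2^r$-type loss, and it is precisely this loss that Conjecture \ref{conj main} is designed to avoid, so a genuinely new extraction mechanism — perhaps via regularity for ordered hypergraphs, or a semi-random weighted iteration directly on the matching — appears to be required.
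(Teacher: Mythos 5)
This is stated in the paper as an open conjecture (Conjecture~\ref{conj main}); the paper offers no proof, and neither do you --- your submission is a research-strategy sketch, which you honestly frame as such. The upper bound is indeed settled by Theorem~\ref{thm:random}, and you are right that the $r=2$ lower bound is, via $t^{(2)}(n)=\Theta(\tau(n))$, exactly Gawron's conjecture, so any divide-and-conquer scheme of the type used in Lemma~\ref{double_ind} that bottoms out at the permutation case must resolve that conjecture first.

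Two concrete cautions about the $r\ge3$ plan. First, the proposed ``density analogue'' of Theorem~\ref{Lisa} is not discriminating: in a uniformly random $r$-matching each of the $\tfrac12\binom{2r}{r}$ patterns already occurs on a constant fraction of edge pairs, so ``some pattern $P$ has positive pair-density'' is automatic and carries no structural content by itself; what the argument actually needs is a sub-matching in which the pattern distribution is controlled \emph{jointly at all scales}, which is a far stronger property and is precisely what a Ramsey-type extraction pays the $2^r$ price to achieve. Second, the claim that the Section~\ref{rm} computation would apply ``essentially verbatim'' inside such an $M_P$ is doubtful: that computation uses uniform randomness both to lower-bound $\E(W)$ (through the quotient $\alpha^{(r)}_{n-2}/\alpha^{(r)}_n$) and to upper-bound $\PP(D=d)$, and both estimates rely on the edges being placed exchangeably; once the pair distribution is conditioned to be $P$-dominant these quotients change, and a genuine $P$-clique is maximally \emph{structured} rather than pseudo-random, which is the opposite of what the variance argument wants. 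You correctly identify the extraction step as the crux, but as written the proposal supplies no mechanism to perform it, and the conjecture remains open for every $r\ge2$.
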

Notice that the case $r=2$ of this statement is the original Gawron's conjecture. It seems that even increasing the exponent of $n$ in the lower bound (\ref{eq:thm:deterministic}) to the inverse of any polynomial in $r$ will be quite a challenge.

One could also consider a generalization to $t$-tuplets, that is, $t$-tuples of vertex disjoint order-isomorphic sub-matchings of a given $r$-matching.
In fact, the proof techniques from Section~\ref{rm} yield that  this generalized parameter is a.a.s.\ $\Theta(n^{t/(r(t-1)+1)})$ (for $t=2$ we thus recover Theorem~\ref{thm:random}). A highly technical analog of Theorem \ref{thm:deterministic} could be proved as well.

It seems also natural to study the problem of the largest twins in more general classes of ordered graphs ($r=2$) or hypergraphs ($r\ge3$). By \emph{twins} in an ordered graph $G$ we mean a pair of edge-disjoint order-isomorphic subgraphs of $G$. Let $t(G)$ denote the maximum size of  twins in $G$, and let $t(m)$ be the minimum of $t(G)$ over all ordered graphs with $m$ edges. What can be said about the function $t(m)$? By the results for ordered matchings we know only that $t(m)=O(m^{\frac{2}{3}})$, but is it  optimal? It is worth mentioning that the analogous problem for unordered graphs was  solved by Lee, Loh, and Sudakov \cite{LeeLohSudakov} who proved that the corresponding function is  $\Theta\left((m\log m)^{\frac{2}{3}}\right)$.

Finally, one may also investigate  the size of twins in general words over finite alphabets. Therein, twins are defined as  pairs of identical subsequences occupying disjoint sets of positions. Actually,  our motivation to study twins in permutations and ordered matchings has been ignited by a beautiful result of Axenovich, Person, and Puzynina \cite{AxenovichPP}, stating that every binary word of length $n$ contains twins of size $\frac{1}{2}n-o(n)$. %For words over larger alphabets the corresponding problem remains a mystery.

Inspired by the word representation of ordered matchings (used also in this paper), one may consider a relaxed variant of twins in words, in which a pair of disjoint  subwords forms \emph{permuted twins} if they are identical up to a permutation of their letters. For instance, in this new setting, the subwords $112142212$ and  $223213323$ of a word over alphabet $\{1,2,3,4\}$, would form permuted twins (under permutation $2341$). How large permuted twins  may one find in every word of length $n$ over a $k$-element alphabet? Will they be much bigger, especially for large $k$, than in the classical case?

\bigskip
\noindent
\textbf{Acknowledgments.}
 We would like to thank an anonymous referee for a careful reading of the manuscript and suggesting a number of editorial improvements.

\end{document}